\newtheorem{theorem}{\textbf{\textsc{Theorem}}}[section]
\newtheorem{definition}[theorem]{\textbf{\textsc{Definition}}}
\newtheorem{remark}[theorem]{\textbf{\textsc{Remark}}}
\newenvironment{proof}
{\noindent\mbox{\textsf{\textbf{\textsc{Proof}}}:}}
{\hfill{\scriptsize \mbox{\underline{\tt\textbf{QED}\,}$\!|$}}\bigskip}
\title{Tarski's Undefinability Theorem and Diagonal Lemma}
\author{{\sc Saeed \  Salehi  } \\
\textrm{\large Research Institute for Fundamental Sciences \textup{(RIFS)}, University of Tabriz,}\\ \textrm{\large P.O.Box~51666--16471, Tabriz, IRAN.  \, E-mail:\!~\textsf{\normalsize salehipour@tabrizu.ac.ir}} }
\begin{document}

\maketitle

\pagestyle{fancy}
\lhead[ ]{ \thepage\quad{\sc Saeed Salehi} ({\sf 2020}) }
\chead[ ]{ }
\rhead[ ]{ {\em Tarski's Undefinability Theorem and Diagonal Lemma} }
\lfoot[ ]{ }
\cfoot[ ]{ }
\rfoot[ ]{ }

\begin{abstract}
We prove the equivalence of the semantic version of Tarski's theorem on the undefinability of truth with a semantic version  of the Diagonal Lemma, and also show the equivalence of syntactic Tarski's Undefinability Theorem with a weak syntactic diagonal lemma. We outline two seemingly diagonal-free proofs for these theorems from the literature, and show that syntactic Tarski's theorem can deliver  G\"odel-Rosser's Incompleteness Theorem.

\noindent
{\bf Keywords}:
Diagonal Lemma,
Diagonal-Free Proofs,
G\"odel’s Incomplteness Theorem,
Rosser’s Theorem, 
Self-Reference, 
Tarski’s Undefinability Theorem.

\noindent
{\bf 2020 AMS MSC}:
03F40,   	
%Gödel numberings and issues of incompleteness
03A05,  	
%Philosophical and critical (Philosophical aspects of logic and foundations)
03F30,  	
%First-order arithmetic and fragments
03C40.
%Interpolation, preservation, definability
\end{abstract}
%

%\articledoi{https://doi.org/0123456789}
%\volumeissueyear{vv}{nn}{20yy}%
%\setcounter{page}{1}
% This sets the starting page number

%\begin{itemize}\itemindent=5em
%\item[\textbf{{ABSTRACT.}}]
%Kaave John: could you please write a sexy abstract for this manuscript by your beautiful style? We shall try the following journals in this order: {\em JSL} ({\sc The Journal of Symbolic Logic}), {\em JPL} ({\sc Journal of Philosophical Logic}) and {\em FM} ({\sc Fundamenta Mathematic{\ae}}). Hopefully, this will be accepted in the first submission! Our names and affiliations can be deleted  (for the double blind refereeing process) if necessary.
%\hfill \textsf{\small 01 January 2020}
%\end{itemize}

\section{Introduction}\label{sec:intro}

{\Large O}{\large\textsc{ne}}\textsc{ of the cornerstones}  of modern logic (and theory of incompleteness after G\"odel) is the Diagonal Lemma (aka Self-Reference, or Fixed-Point Lemma) due to G\"odel and Carnap (see \cite{salehi} and the references therein). The lemma states that (when  $\alpha\mapsto\ulcorner\alpha\urcorner$ is a suitable G\"odel coding  which assigns the closed term $\ulcorner\alpha\urcorner$ to a syntactic expression or object $\alpha$) for a given formula $\Psi(x)$ with the only free variable $x$, there exists some sentence $\theta$ such that the equivalence $\Psi(\ulcorner\theta\urcorner)\leftrightarrow\theta$ holds; ``holding'' could mean either {\sl being true in the standard model of natural numbers $\mathbb{N}$} or {\sl being provable in a suitable theory $T$} (which is usually taken to be a consistent extension of Robinson's arithmetic). When the equivalence $\Psi(\ulcorner\theta\urcorner)\leftrightarrow\theta$ holds in $\mathbb{N}$   we call it {\em the Semantic Diagonal Lemma} (studied in Section~\ref{sec:sema}); when $T$ proves the equivalence, we call it {\em the Syntactic Diagonal Lemma}. The {\em Weak Diagonal Lemma}  states the consistency of the sentence
$\Psi(\ulcorner\theta\urcorner)\leftrightarrow\theta$ with   $T$, for some sentence $\theta$ which depends on the given arbitrary formula $\Psi(x)$ and the theory $T$
 (studied in Section~\ref{sec:synt}).

The  Diagonal Lemma has been used in proving  many fundamental    theorems of mathematical logic, such as G\"odel's First and (also) Second Incompleteness Theorems, Rosser's (strengthening of G\"odel's Incompleteness) Theorem, and Tarski's Theorem (on the Undefinability of Truth). One problem with the Diagonal Lemma is its standard proof which is a kind of magic (or ``pulling a rabbit out of the hat'', see e.g. \cite{wass}); indeed it is not easy to remember its typical proof, even after several years of teaching it. Here, we quote some texts on the proof of this lemma from the literature:
\begin{itemize}\itemindent=1em
\item[(1998)]\!S.\ Buss writes in \cite{buss} that the proof of the Diagonal Lemma is
``quite simple but rather tricky and difficult to
conceptualize.''
\medskip
\item[(2002)]\!V.\ McGee  states in \cite{mcgee}  that by  the diagonal (aka self-referential) lemma   there  exists  a sentence $\phi$ for a given formula $\Psi(x)$ such that $\Psi(\ulcorner\phi\urcorner)\leftrightarrow\phi$ is provable in Robinson's arithmetic.
``You would hope that such a deep theorem would have an insightful proof. No such luck. I am going to write down a sentence
$\phi$
 and verify that it works.  What I won't do is give you a
satisfactory explanation for why I write down the particular formula I do. I write down the
formula because G\"odel wrote down the formula, and G\"odel wrote down the formula because,
when he played the logic game he was able to see seven or eight moves ahead, whereas you and
I are only able to see one or two moves ahead. I don't know anyone who thinks he has a fully
satisfying understanding of why the Self-referential Lemma works. It has a rabbit-out-of-a-hat
quality for everyone."
\medskip
\item[(2004)]\!H.\ Kotlarski  \cite{kotlarski04} said  that
the diagonal
lemma ``being very intuitive in the natural language,
is highly unintuitive in formal theories like Peano arithmetic. In fact,
the usual proof of the diagonal lemma [...] is short, but tricky and
difficult to conceptualize. The problem was to eliminate this lemma
from proofs of G\"odel's result.''
\medskip
\item[(2006)]\!G.\  Ser\'eny  \cite{sereny06} attempts to make ``the proof of the lemma completely transparent by showing that it is simply a straightforward
translation of the Grelling paradox into first-order arithmetic.''
\medskip
\item[(2006)]   H.\ Gaifman mentions in \cite{gaifman} that the proof of the Diagonal Lemma  is ``extremely short''. However, the ``brevity of the proof does not make for transparency; it has the aura of a magician's trick.''
\end{itemize}

In this paper, we attempt at giving some explanations and motivations for this basic lemma, in a way that   we will have  a satisfactory understanding for at least some weaker versions of it. For that purpose, we will first  see the equivalence of the semantic form of the diagonal lemma with Tarski's theorem on the undefinability of arithmetical truth (in Section~\ref{sec:sema}).
In other words, the diagonal lemma holds in the standard model of natural numbers just because the set of (the G\"odel codes of) the true arithmetical sentences is not definable.
As a matter of fact, different proofs for Tarski's undefinability theorem can lead to different proofs for the semantic version of this lemma. We will review two such proofs (presented in \cite{caicedo,kossak,kotlarski98,kotlarski04,sereny04}) which are supposedly diagonal-free. Having different proofs will, hopefully, shed some new light on the nature of this lemma and will increase our understanding about it. Then, secondly, we will see that a syntactic version of Tarski's theorem is equivalent to a weak (syntactic) version of the diagonal lemma (in Section~\ref{sec:synt}). This weak form of the diagonal lemma is still sufficiently strong to prove G\"odel-Rosser's
%(strengthening of G\"odel's)
incompleteness theorem. So, different proofs of the syntactic version of Tarski's theorem   will provide some   seemingly diagonal-free proofs   for  Rosser's theorem  (cf.~\cite{visser19}, in which G\"odel's second incompleteness theorem is derived from Tarski's undefinability
theorem by some circular-free arguments).

\section{The Diagonal Lemma, Semantically}
\label{sec:sema}

\begin{definition}[Semantic Diagonal Lemma]\label{def:semdl}
\noindent

\noindent
The following statement is called the {\em Semantic Diagonal Lemma}:

\qquad
{\sl For every formula $\Psi(x)$ there exists a sentence $\theta$ such that $\mathbb{N}\vDash\Psi(\ulcorner\theta\urcorner)
\leftrightarrow\theta$.}
\hfill\ding{71}\end{definition}

This (weaker) form of the Diagonal Lemma serves to prove  the  semantic version  of G\"odel's Incompleteness Theorem (see~\cite[Theorem~6.3]{smith}):

\begin{theorem}[G\"odel's Incompleteness Theorem for Sound  Definable Theories]\label{thm:g1}
\noindent

\noindent
For every definable and sound theory $T$ there exists a  true sentence  independent from $T$.
\end{theorem}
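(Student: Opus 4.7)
The plan is to derive this semantic incompleteness result directly from the Semantic Diagonal Lemma just stated (Definition~2.1). First I would fix a definable and sound theory $T$. Since $T$ is definable, its set of theorems is arithmetically definable in $\mathbb{N}$ (the axiom set is definable by assumption, and provability from a definable axiom set can be captured by an arithmetic formula over $\mathbb{N}$), so there is a formula $\mathrm{Prov}_T(x)$ such that for every sentence $\sigma$, $\mathbb{N}\vDash\mathrm{Prov}_T(\ulcorner\sigma\urcorner)$ iff $T\vdash\sigma$.

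Next, I would apply the Semantic Diagonal Lemma to the formula $\neg\mathrm{Prov}_T(x)$, obtaining a sentence $\theta$ with
\[
\mathbb{N}\vDash\theta\leftrightarrow\neg\mathrm{Prov}_T(\ulcorner\theta\urcorner).
\]
This is the Gödel-style self-referential sentence asserting its own unprovability, but we now get it \emph{for free} from the semantic diagonal lemma rather than constructing it by hand.

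Then I would argue in two short steps. Suppose $T\vdash\theta$; by the defining property of $\mathrm{Prov}_T$, $\mathbb{N}\vDash\mathrm{Prov}_T(\ulcorner\theta\urcorner)$, whence by the diagonal equivalence $\mathbb{N}\vDash\neg\theta$, contradicting the soundness of $T$. Hence $T\nvdash\theta$, so $\mathbb{N}\vDash\neg\mathrm{Prov}_T(\ulcorner\theta\urcorner)$, and therefore $\mathbb{N}\vDash\theta$, i.e.\ $\theta$ is true. Finally, if $T\vdash\neg\theta$ then by soundness $\mathbb{N}\vDash\neg\theta$, contradicting that $\theta$ is true; so $T\nvdash\neg\theta$ as well, establishing that $\theta$ is a true sentence independent of $T$.

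The only delicate point, and the one I would spell out most carefully, is the existence of the arithmetic formula $\mathrm{Prov}_T(x)$ defining the theorems of $T$ in $\mathbb{N}$. Everything else is a routine manipulation of the diagonal equivalence together with soundness, so the real content of the theorem is packaged inside the Semantic Diagonal Lemma and the definability hypothesis on $T$.
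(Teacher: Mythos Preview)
Your proof is correct and follows essentially the same route as the paper: obtain a provability predicate from the definability of $T$, apply the Semantic Diagonal Lemma to its negation, and use soundness to conclude that the resulting sentence is true, unprovable, and irrefutable. The only cosmetic difference is in how the contradiction from $T\vdash\theta$ is phrased (you derive $\mathbb{N}\vDash\neg\theta$ and invoke soundness, while the paper derives both $\mathbb{N}\vDash{\sf Pr}_T(\ulcorner\gamma\urcorner)$ and $\mathbb{N}\vDash\neg{\sf Pr}_T(\ulcorner\gamma\urcorner)$), but this is the same argument.
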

\begin{proof}

\noindent
If $T$ is definable then there exists a formula ${\sf Pr}_T(x)$ such that for every sentence $\eta$ we have $T\vdash\eta$ if and only if $\mathbb{N}\vDash{\sf Pr}_T(\ulcorner\eta\urcorner)$. Now, by the Semantic Diagonal Lemma
we have $\mathbb{N}\vDash\gamma\leftrightarrow\neg{\sf  Pr}_T(\ulcorner\gamma\urcorner)$ for some
sentence $\gamma$. It can be seen that $T\nvdash\gamma$, since $T\vdash\gamma$ implies on the one hand that $\mathbb{N}\vDash{\sf Pr}_T(\ulcorner\gamma\urcorner)$, and on the other hand (by the soundness of $T$) that $\mathbb{N}\vDash\gamma$ and so $\mathbb{N}\vDash\neg{\sf Pr}_T(\ulcorner\gamma\urcorner)$, a contradiction. So, $T\nvdash\gamma$, therefore $\mathbb{N}\vDash\neg{\sf Pr}_T(\ulcorner\gamma\urcorner)$, whence $\mathbb{N}\vDash\gamma$, which also implies (by the soundness of $T$) that $T\nvdash\neg\gamma$.
\end{proof}

Also, Tarski's Theorem on the Undefinability of (arithmetical) Truth follows from the Semantic Diagonal Lemma (see~\cite[Exercise~3.7]{kaye} and cf.~\cite[Chapter~9]{kaye}):

\begin{theorem}[Tarski's   Theorem on the Undefinability of Arithmetical Truth]\label{thm:semt}
\noindent

\noindent
The G\"odel codes of the set of true sentences, i.e.  $\{\ulcorner\eta\urcorner
\mid\mathbb{N}\vDash\eta\}$,  is not definable in $\mathbb{N}$.
\end{theorem}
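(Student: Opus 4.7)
The plan is a short proof by contradiction, using the Semantic Diagonal Lemma (Definition~\ref{def:semdl}) as the only nontrivial ingredient. First I would assume the negation: suppose that the truth set $\{\ulcorner\eta\urcorner\mid\mathbb{N}\vDash\eta\}$ is definable in $\mathbb{N}$, so that there exists a unary formula $T(x)$ satisfying $\mathbb{N}\vDash T(\ulcorner\eta\urcorner)$ if and only if $\mathbb{N}\vDash\eta$, for every sentence $\eta$.

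Next I would invoke the Semantic Diagonal Lemma, but not on $T(x)$ itself (which would give a harmless fixed point) rather on its negation $\Psi(x):=\neg T(x)$, producing a sentence $\theta$ with $\mathbb{N}\vDash\theta\leftrightarrow\neg T(\ulcorner\theta\urcorner)$. The final step is to chain this equivalence with the Tarski biconditional applied to $\eta:=\theta$: reading off, $\mathbb{N}\vDash\theta$ iff $\mathbb{N}\vDash\neg T(\ulcorner\theta\urcorner)$ iff $\mathbb{N}\not\vDash T(\ulcorner\theta\urcorner)$ iff $\mathbb{N}\not\vDash\theta$, which is the desired contradiction. In effect, $\theta$ is a formalized liar sentence, saying \emph{``I am not true''}, and the truth predicate $T(x)$ is precisely what is needed to make the liar paradox internal to arithmetic.

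There is no real obstacle, because the entire combinatorial content of the undefinability theorem is absorbed into the Semantic Diagonal Lemma; the deduction above is purely propositional once the lemma is in hand. The only point that demands care is the choice to diagonalize against $\neg T(x)$ rather than $T(x)$, which is exactly what turns a bland fixed point into a contradiction. As the introduction foreshadows, Section~\ref{sec:sema} will also wish to run this implication backwards, so the present argument is really one direction of an equivalence between the Semantic Diagonal Lemma and Tarski's undefinability theorem, and the proof should be written in a form that makes the reverse implication transparent later on.
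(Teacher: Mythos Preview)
Your proposal is correct and follows exactly the paper's own proof: assume a definable truth predicate, apply the Semantic Diagonal Lemma to its negation to obtain a liar sentence, and derive a contradiction. The only differences are notational (the paper writes $\Gamma$ for your $T$ and $\lambda$ for your $\theta$).
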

\begin{proof}

\noindent
If
$\{\ulcorner\eta\urcorner
\mid\mathbb{N}\vDash\eta\}$ is definable by
some
$\Gamma(x)$, then
$\mathbb{N}\vDash\Gamma(\ulcorner\theta\urcorner)
\!\leftrightarrow\!\theta$ holds
for every sentence $\theta$. Now, by the Semantic Diagonal Lemma
$\mathbb{N}\vDash\neg\Gamma
(\ulcorner\lambda\urcorner)
\!\leftrightarrow\!\lambda$ holds for a sentence
$\lambda$; so we have $\mathbb{N}\vDash\lambda\!\leftrightarrow\!
\Gamma(\ulcorner\lambda\urcorner)
\!\leftrightarrow\!\neg\lambda$, which is a contradiction.
\end{proof}

The fact of the matter is that  the Semantic Diagonal Lemma is equivalent to Tarski's Theorem on the Undefinability of Truth and to Semantic Incompleteness Theorem of G\"odel:

\begin{theorem}[Semantic Diagonal Lemma
$\!\!\!\boldsymbol\iff\!\!\!$ 
Semantic G\"odel's  Theorem
$\!\!\!\boldsymbol\iff\!\!\!$ Tarski's  Theorem]\label{thm:semequ}
%\noindent

\noindent
The following statements are equivalent:

1. Semantic Diagonal Lemma (Definition~\ref{def:semdl});

2. Semantic G\"odel's Incompleteness Theorem (\ref{thm:g1});

3. Semantic Tarski's Undefinability Theorem (\ref{thm:semt}). 
\end{theorem}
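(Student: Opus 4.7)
The plan is to close the cycle of implications. The excerpt already contains the hard direction in both directions out of (1): the proof of Theorem~\ref{thm:g1} gives $(1)\Rightarrow(2)$, and the proof of Theorem~\ref{thm:semt} gives $(1)\Rightarrow(3)$. So it suffices to supply two return arrows; I would choose $(3)\Rightarrow(1)$ and $(2)\Rightarrow(3)$, which keeps each argument at a single line of semantic reasoning.

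For $(3)\Rightarrow(1)$, I would argue contrapositively. Suppose some formula $\Psi(x)$ has no semantic fixed point, i.e., for every sentence $\theta$, $\mathbb{N}\nvDash\Psi(\ulcorner\theta\urcorner)\leftrightarrow\theta$. Since the biconditional is classical, this is equivalent to $\mathbb{N}\vDash\Psi(\ulcorner\theta\urcorner)\leftrightarrow\neg\theta$ for every sentence $\theta$, i.e., $\mathbb{N}\vDash\neg\Psi(\ulcorner\theta\urcorner)\leftrightarrow\theta$. But then the formula $\neg\Psi(x)$ defines the set $\{\ulcorner\eta\urcorner\mid\mathbb{N}\vDash\eta\}$ in $\mathbb{N}$, contradicting Tarski's Undefinability Theorem. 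This is essentially the ``dual'' of the proof of Theorem~\ref{thm:semt}: the same line of computation runs backwards.

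For $(2)\Rightarrow(3)$, I would again contrapose. Assume the set of true sentences is defined by some $\Gamma(x)$, and consider the theory $T^{\star}=\{\eta\mid\mathbb{N}\vDash\eta\}$ of true arithmetic. Then $T^{\star}$ is sound by construction, and it is definable in the sense of Theorem~\ref{thm:g1} because $\Gamma(x)$ itself serves as ${\sf Pr}_{T^{\star}}(x)$: $T^{\star}\vdash\eta$ iff $\mathbb{N}\vDash\eta$ iff $\mathbb{N}\vDash\Gamma(\ulcorner\eta\urcorner)$. But $T^{\star}$ is also complete, since for any sentence $\eta$ either $\eta$ or $\neg\eta$ is true, hence in $T^{\star}$. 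So no true sentence is independent from $T^{\star}$, contradicting Semantic G\"odel.

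I do not expect a genuine obstacle here; the arguments are each a few lines. The only point that needs a moment of care is the second one, namely that true arithmetic qualifies as a ``definable sound theory'' in the sense used in Theorem~\ref{thm:g1}. This is immediate once $\Gamma$ is in hand, but it is the step that depends most delicately on the precise meaning of ``definable theory'' adopted in the paper, so I would spell it out explicitly to keep the equivalence airtight.
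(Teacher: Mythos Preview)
Your proposal is correct and matches the paper's own proof essentially line for line: the paper also cites Theorems~\ref{thm:g1} and~\ref{thm:semt} for the outgoing arrows, proves $(2)\Rightarrow(3)$ by noting that ${\rm Th}(\mathbb{N})$ would be a sound, definable, yet complete theory, and proves $(3)\Rightarrow(1)$ by observing that if $\neg\Psi$ fails to define truth then the witnessing $\theta$ satisfies $\mathbb{N}\vDash\Psi(\ulcorner\theta\urcorner)\leftrightarrow\theta$ via the tautology $\neg(p\leftrightarrow q)\equiv(\neg p\leftrightarrow q)$. The only cosmetic difference is that you phrase $(3)\Rightarrow(1)$ contrapositively while the paper phrases it directly.
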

\begin{proof}

\noindent
The implication ($1\Rightarrow 2$) is proved in Theorem~\ref{thm:g1} (and $1\!\Rightarrow\!3$ is proved in Theorem~\ref{thm:semt}). 

($2\Rightarrow 3$): If ${\rm Th}(\mathbb{N})=\{\eta\mid\mathbb{N}\vDash\eta\}$ were definable, then since it is sound, there would be some sentence independent from it (by 2); but it is a complete theory.

($3\Rightarrow 1$):
Suppose that the set $\{\ulcorner\theta\urcorner\mid\mathbb{N}
\vDash\theta\}$ is not definable by any formula. Then for a given formula $\Psi(x)$ the formula $\neg\Psi(x)$ cannot define this set, and so we cannot have  $\mathbb{N}\vDash
\neg\Psi(\ulcorner\beta\urcorner)
\!\iff\!\mathbb{N}\vDash\beta$, for all sentences $\beta$; whence there should exists some sentence $\theta$ such that $\mathbb{N}\nvDash\neg\Psi(\ulcorner\theta\urcorner)
\leftrightarrow\theta$. Now, by the classical propositional   tautology $\neg(p \leftrightarrow q)\equiv (\neg p \leftrightarrow q)$, we have $\mathbb{N}\vDash\Psi(\ulcorner\theta\urcorner)
\leftrightarrow\theta$. So, for every $\Psi(x)$ there exists some $\theta$ for which the equivalence $\Psi(\ulcorner\theta\urcorner)
\leftrightarrow\theta$ holds in $\mathbb{N}$.
\end{proof}

So, after all, Tarski's Undefinability Theorem (in its semantic form) is not very much different from the Diagonal Lemma (in the semantic form). Therefore, it may seem at the first glance that the only way to prove Tarski's theorem is to use the Diagonal Lemma (as is done in almost all the textbooks). But as a matter of fact, there are some, supposedly, diagonal-free proofs for Tarski's theorem in the literature (see e.g. \cite{kossak}) which by Theorem \ref{thm:semequ} can give us some diagonal-free proofs for the Diagonal Lemma itself! We will outline two of them  below.

\[
\textrm{ Assume  that } \Upsilon(x)  \textrm{ defines truth in } \mathbb{N}; \textrm{  i.e., }
\mathbb{N}\vDash\Upsilon(\ulcorner\beta\urcorner)
\leftrightarrow\beta \textrm{ for all sentences } \beta.
 \tag{$\textgoth{T}$} \label{eq:special}
\]

\subsection{\bf The First Proof}\label{p1}
\paragraph{\textsc{Convention}:}
Let us make the convention that all the individual variables of our syntax are $x,x',x'',x''',\cdots$ whose lengths are $1,2,3,4,\cdots$, respectively.
 By this convention, there will be at most finitely many formulas with length $n$ for a given $n\!\in\!\mathbb{N}$ (otherwise  the formulas $x\!=\!x,\;y\!=\!y,\;z\!=\!z,\cdots$ all would have  length three).

 \begin{definition}[${\tt len}(x),\overline{n}$, {\sf definability}, ${\tt D}(x),{\tt Def}_\Upsilon^{<z}(y),{\tt Berry}_\Upsilon^{<v}(u),\ell_\Upsilon,{\tt B}_\Upsilon(x)$]\label{def:berry}
\noindent

\begin{itemize}%\itemindent=4em
\item   Let ${\tt len}(x)$ denote the {\em length} of the formula with G\"odel code $x$.
\smallskip
\item  For $n\!\in\!\mathbb{N}$,   let $\overline{n}$ be the term that {\em represents} the number $n$, i.e., $\overline{0}=0$, $\overline{1}=1$, and for every $m\!\geqslant\!1$ we have   $\overline{m+1}=1+(\overline{m})$.
\smallskip
\item    We say that a number $n\!\in\!\mathbb{N}$ is {\em definable} by the  formula $\varphi(x)$, in which $x$ is the only free variable, when $\forall \zeta[\varphi(\zeta)\leftrightarrow\zeta\!=\!\overline{n}]$ is true \textup{(}in $\mathbb{N}$\textup{)}.
\smallskip
\item  Let ${\tt D}(x,y)$ be the G\"odel code of the formula which states  that {\em
the formula with G\"odel code $x$ defines
the number $y$}; so, ${\tt D}(\ulcorner\varphi\urcorner,y) =
\ulcorner\forall\zeta[\varphi(\zeta)\leftrightarrow\zeta\!=\!y]\urcorner$.
\smallskip
\item Let ${\tt Def}_\Upsilon^{<z}(y)$ be the formula  $\exists\alpha\;\big({\tt Formula}(\alpha) \;\wedge\; {\tt len}(\alpha)\!<\!z\;\wedge\;
\Upsilon[{\tt D}(\alpha,y)]\big)$
which states that  {\em the number $y$ is definable by a formula with length  less than $z$} if $\Upsilon$ is a truth predicate; needless to say, ${\tt Formula}(\alpha)$ states that $\alpha$ is the G\"odel code of a formula.
\smallskip
\item Let ${\tt Berry}_\Upsilon^{<v}(u)$ be the formula $\neg{\tt Def}_\Upsilon^{<v}(u)\;\wedge\; \forall w\!<\!u\,{\tt Def}_\Upsilon^{<v}(w)$, which states  that  {\em $u$ is the least number not defined by a formula with length less than $v$}.
\smallskip
\item Let $\ell_\Upsilon$ be the length of the formula   ${\tt Berry}_\Upsilon^{<x'}(x)$.
\smallskip
\item Let  ${\tt B}_\Upsilon(x)$ be the formula $\exists x' \big[ x'\!=\!\overline{6}\cdot\overline{\ell_\Upsilon} \;\wedge\;
{\tt Berry}_\Upsilon^{\;<x'}(x)\big]$.
\hfill\ding{71}
\end{itemize}
\end{definition}

\noindent
Here is an alternative proof (from \cite{caicedo,kotlarski04,sereny04}) for contradicting (\ref{eq:special}):

\bigskip

\begin{proof}

\noindent
The length of  ${\tt B}_\Upsilon(x)$ is  less than $6\ell_\Upsilon$; since it can be seen to be equal to $10\!+\!{\tt len}(\overline{5})\!+\!{\tt len}(\overline{\ell_\Upsilon})\!+\!\ell_\Upsilon
\!=\!24\!+\!5\ell_\Upsilon$, as we have ${\tt len}(\overline{m})\!=\!4m\!-\!3$ for every $m\!\geqslant\!1$. So, the formula ${\tt B}_\Upsilon(x)$ with length less than $6\ell_\Upsilon$ states that $x$ is the least number that is not definable by any formula with length less than $6\ell_\Upsilon$.  Whence, if ${\tt B}_\Upsilon(x)$ holds, then $x$ should not be definable by ${\tt B}_\Upsilon(x)$ itself. But this is a contradiction, since if ${\tt B}_\Upsilon(x)$ holds, then $x$ is definable by
${\tt B}_\Upsilon(\zeta)$. That is   because  ${\tt B}_\Upsilon(x)$ implies $\forall\zeta[{\tt B}_\Upsilon(\zeta)\leftrightarrow\zeta\!=\!x]$ by the sentence  $\forall u,v[{\tt B}_\Upsilon(u)\wedge{\tt B}_\Upsilon(v)\rightarrow u\!=\!v]$, which follows in turn from the sentence $\forall u,v,w[{\tt Berry}_\Upsilon^{<w}(u)\wedge{\tt Berry}_\Upsilon^{<w}(v)\rightarrow u\!=\!v]$ that can be   proved from the basic laws of the order relation. So, for no $x$ can ${\tt B}_\Upsilon(x)$ hold.
 Now, in reality, there exists a number $\mathfrak{b}\!\in\!\mathbb{N}$ that  is   not definable  by any formula of length less than $6\ell_\Upsilon$ (since by our convention there are only finitely many formulas with length less than $6\ell_\Upsilon$). So, $\neg{\tt Def}_\Upsilon^{<\overline{6}\cdot
 \overline{\ell_\Upsilon}}(\overline{\mathfrak{b}})$ is true,  and since it is the least such number then $\forall w\!<\!\overline{\mathfrak{b}}\,{\tt Def}_\Upsilon^{<\overline{6}\cdot\overline{\ell}}(w)$ is true too. Thus, ${\tt Berry}_\Upsilon^{<\overline{6}\cdot\overline{\ell_\Upsilon}}
 (\overline{\mathfrak{b}})$ is true; and so is ${\tt B}_\Upsilon(\overline{\mathfrak{b}})$, which is a contradiction.
\end{proof}

This proof of Tarski's theorem (\ref{thm:semt}) does not use the Diagonal Lemma (and so it can be called diagonal-free in a way), though it can be debated whether the proof is genuinely circular-free or not.  By incorporating the proof of Theorem~\ref{thm:semequ} into this proof (e.g. by taking $\Upsilon\equiv\neg\Psi$), one can get a proof for the Semantic Diagonal Lemma, which is different from the standard (textbook) proofs (see~\cite{salehi}).

\subsection{\bf The Second Proof}\label{p2}
 \begin{definition}[Definable and Dominating Functions]\label{def:func}
\noindent

\noindent
A function $f\!\!:\mathbb{N}\rightarrow\mathbb{N}$ is called {\em  definable} whenever there exists a formula $\varphi(u,v)$ such that for every $m,n\!\in\!\mathbb{N}$ we have $f(m)\!=\!n \iff \mathbb{N}\vDash\varphi(\overline{m},\overline{n})$.

\noindent
A function $F\!\!:\mathbb{N}\rightarrow\mathbb{N}$ is said to {\em dominate}  a function $f\!\!:\mathbb{N}\rightarrow\mathbb{N}$, whenever there exists some $n\!\in\!\mathbb{N}$ such that $F(x)\!>\!f(x)$ holds for all $x\!\geqslant\!n$.
\hfill\ding{71}\end{definition}

Indeed, for a given countably indexed family of functions $\{f_i\!\!:\mathbb{N}\rightarrow\mathbb{N}\}_{i\in\mathbb{N}}$ one can find a function that dominates all the functions of this family:
put
\newline\centerline{$F(x)\!=\!1\!+\!\max_{i\leqslant x}f_i(x)$;} then for every $k\!\in\!\mathbb{N}$ and every $x\!\geqslant\!k$,  $f_k(x)\!\leqslant\!\max_{i\leqslant x}f_i(x)\!<\![1\!+\!\max_{i\leqslant x}f_i(x)]\!=\!F(x)$ holds. This idea is used in the following proof of Tarski's theorem (\ref{thm:semt}); cf. \cite{kotlarski98,kotlarski04}:

\bigskip

\begin{proof}

\noindent
Define the function $F\!\!:\mathbb{N}\rightarrow\mathbb{N}$ as
$$F(x)=\min\{ y \mid \forall\alpha\!\leqslant\!x[\exists z\,\alpha(x,z)
\!\rightarrow\!\exists z\!<\!y\,\alpha(x,z)]\}.$$
We show that the function $F$ dominates every definable function, but is itself definable if  (\ref{eq:special}) holds; and this is a contradiction (since no function can dominate itself). To see that $F$ dominates the family of all definable functions, assume that a function $f\!\!:\mathbb{N}\rightarrow\mathbb{N}$ is definable by a formula  $\varphi(u,v)$. Now, for every   $m\!\geqslant\!\ulcorner\varphi\urcorner$ we show that $F(m)\!>\!f(m)$ holds:  if
$F(m)\!\leqslant\!f(m)$, then from     $\varphi(\overline{m},\overline{f(m)})$
we have $\exists z \varphi(\overline{m},v)$ and so
$\exists z\!<\!\overline{F(m)}\!\!: \varphi(\overline{m},z)$ by the definition of $F$, which implies
$\exists z\!<\!\overline{f(m)}\!\!: \varphi(\overline{m},z)$
by the assumption $F(m)\!\leqslant\!f(m)$; but for every
 $k\neq f(m)$ we have $\neg\varphi(\overline{m},\overline{k})$, and so
 $\forall z\!<\!\overline{f(m)}\!\!: \neg\varphi(\overline{m},z)$, a contradiction.
Now, if (\ref{eq:special}) holds for $\Upsilon$,  then $F$   is actually definable by   %the formula
$\psi(u,v)\wedge\forall w\!<\!v\neg\psi(u,w)$ where $\psi(u,v)$ is the formula $\forall\alpha\!\leqslant\!u[\exists z\,\Upsilon(\ulcorner\alpha(u,z)\urcorner)
\rightarrow\exists z\!<\!v\,\Upsilon(\ulcorner\alpha(u,z)\urcorner)]$.
\end{proof}

As a matter of fact, the function $F$ used   by Kotlarski \cite{kotlarski98,kotlarski04} is defined as
$$F(x)=\min\{ y \mid \forall\alpha,\!u\!\leqslant\!x[\exists z\,\alpha(u,z)
\!\rightarrow\!\exists z\!<\!y\,\alpha(u,z)]\}$$
which corresponds to %the  function
$F(x)\!=\!1\!+\!\max_{i,j\leqslant x}f_i(j)$ that dominates $\{f_i\!\!:\mathbb{N}\!\rightarrow\!\mathbb{N}\}_{i\in\mathbb{N}}$.

\section{The Diagonal Lemma, Syntactically}
\label{sec:synt}

The Diagonal Lemma is usually stated as the provability of the equivalence $\Psi(\ulcorner\theta\urcorner)\leftrightarrow\theta$ in a theory like Robinson's arithmetic, for some sentence $\theta$ which depends on the given  formula $\Psi(x)$. Let us call this the {\em Syntactic Diagonal Lemma}. A syntactic version of Tarski's theorem on the undefinability of truth is as follows:

\begin{definition}[Syntactic Tarski's Theorem]\label{def:syntt}
\noindent

\noindent
For a formula $\Psi(x)$, let ${\sf TB}^\Psi$ be the set of all truth biconditionals $\Psi(\ulcorner\beta\urcorner)\leftrightarrow\beta$,  where $\beta$ ranges over all the sentences. That is
${\sf TB}^\Psi=\{\Psi(\ulcorner\beta\urcorner)\leftrightarrow\beta \mid \beta \text{ is a sentence}\}$.

\noindent
The following statement is called the {\em Syntactic Tarski's Theorem} on a consistent  $T$:

\qquad
{\sl For every $\Psi(x)$ we have $T\nsupseteq{\sf TB}^\Psi$.}
\hfill\ding{71}\end{definition}

\begin{definition}[Weak Diagonal Lemma]\label{def:wdl}
\noindent

\noindent
The following statement is called the {\em Weak Diagonal Lemma} about a consistent theory $T$:

 \!\!
{\sl For every  $\Psi(x)$ there exists a sentence $\theta$ such that
$\Psi(\ulcorner\theta\urcorner)
\!\leftrightarrow\!\theta$ is consistent with $T$.}
\hfill\ding{71}\end{definition}

We show that   Syntactic Tarski's Theorem is equivalent to  Weak (Syntactic) Diagonal Lemma.

\begin{theorem}[Weak Diagonal Lemma $\boldsymbol\iff$ Syntactic Tarski's  Theorem]\label{thm:synequ}
\noindent

\noindent
The Weak Diagonal Lemma is equivalent to  Syntactic Tarski's Theorem.
\end{theorem}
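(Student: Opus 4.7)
The plan is to mirror the structure of the semantic equivalence in Theorem~\ref{thm:semequ} (the direction $3\!\Rightarrow\!1$), replacing ``true in $\mathbb{N}$'' with ``consistent with $T$'' and ``not definable'' with ``$T\!\nsupseteq\!{\sf TB}^{(\cdot)}$''. The same propositional tautology $\neg(p\leftrightarrow q)\equiv(\neg p\leftrightarrow q)$ that powered the semantic argument should do the work here, together with a swap of $\Psi$ for $\neg\Psi$ to pass between the two statements. Both directions should come out as short contrapositives.

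For the implication Weak Diagonal Lemma $\Rightarrow$ Syntactic Tarski's Theorem I would argue by contradiction. Assume some $\Psi(x)$ violates Syntactic Tarski, i.e.\ $T\vdash\Psi(\ulcorner\beta\urcorner)\leftrightarrow\beta$ for every sentence $\beta$. Applying the Weak Diagonal Lemma to the formula $\neg\Psi(x)$, I obtain a sentence $\theta$ for which $\neg\Psi(\ulcorner\theta\urcorner)\leftrightarrow\theta$ is consistent with $T$. But the hypothesis on $\Psi$, instantiated at $\theta$, gives $T\vdash\Psi(\ulcorner\theta\urcorner)\leftrightarrow\theta$, and hence $T\vdash\neg\Psi(\ulcorner\theta\urcorner)\leftrightarrow\neg\theta$; combining this with the consistent sentence above yields $T+(\theta\leftrightarrow\neg\theta)$ consistent, the desired contradiction.

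For the converse I again contrapose: assume the Weak Diagonal Lemma fails for some $\Psi(x)$, so for every sentence $\theta$ the formula $\Psi(\ulcorner\theta\urcorner)\leftrightarrow\theta$ is \emph{inconsistent} with $T$, meaning $T\vdash\neg[\Psi(\ulcorner\theta\urcorner)\leftrightarrow\theta]$. Using the propositional tautology $\neg(p\leftrightarrow q)\equiv(\neg p\leftrightarrow q)$ this rewrites as $T\vdash\neg\Psi(\ulcorner\theta\urcorner)\leftrightarrow\theta$ for every sentence $\theta$; that is, $T\supseteq{\sf TB}^{\neg\Psi}$, which refutes Syntactic Tarski's Theorem applied to the formula $\neg\Psi$. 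The only subtlety worth flagging is to make sure ``consistent with $T$'' is unpacked as $T\nvdash\neg(\cdot)$ so that the propositional tautology step is clean; there is no real obstacle, the argument is essentially the syntactic shadow of the semantic one.
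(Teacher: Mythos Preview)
Your proof is correct and follows essentially the same route as the paper's: both directions hinge on applying the relevant hypothesis to $\neg\Psi$ and using the tautology $\neg(p\leftrightarrow q)\equiv(\neg p\leftrightarrow q)$ to pass between ``$T\nvdash\neg\Psi(\ulcorner\theta\urcorner)\leftrightarrow\theta$'' and ``$\Psi(\ulcorner\theta\urcorner)\leftrightarrow\theta$ is consistent with $T$''. The only cosmetic difference is that the paper argues directly while you argue by contradiction/contrapositive; in particular, your first direction takes a small detour through $T+(\theta\leftrightarrow\neg\theta)$, whereas the paper simply notes that consistency of $T$ with $\neg\Psi(\ulcorner\theta\urcorner)\leftrightarrow\theta$ already means $T\nvdash\Psi(\ulcorner\theta\urcorner)\leftrightarrow\theta$, witnessing $T\nsupseteq{\sf TB}^\Psi$ immediately.
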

\begin{proof}

\noindent
First, suppose that the Weak Diagonal Lemma holds for a consistent theory $T$. Take any formula $\Psi(x)$; we show that $T\nsupseteq{\sf TB}^\Psi$. There exists a sentence $\theta$ such that the theory $T$ is consistent with $\neg\Psi(\ulcorner\theta\urcorner)\leftrightarrow
\theta$. Thus, $T\nvdash\Psi(\ulcorner\theta\urcorner)\leftrightarrow
\theta$ and so $T\nvdash{\sf TB}^\Psi$.

Second, suppose that $T\nsupseteq{\sf TB}^\Phi$ for all formulas $\Phi(x)$. Take any formula $\Psi(x)$; we show the existence of some $\theta$ such that $T$ is consistent with $\Psi(\ulcorner\theta\urcorner)\leftrightarrow
\theta$. Since $T\nsupseteq{\sf TB}^{\neg\Psi}$, there should exist some sentence $\theta$ such that $T\nvdash\neg\Psi(\ulcorner\theta\urcorner)\leftrightarrow
\theta$. Therefore, $T$ is consistent with the sentence $\Psi(\ulcorner\theta\urcorner)
\!\leftrightarrow\!\theta$.
\end{proof}

As a matter of fact, the Weak Diagonal Lemma {\em cannot} show the independence of the G\"odel sentence (when the theory is sound even):

 \begin{remark}[Weak Diagonal Lemma vs. G\"odel's Proof]\label{rem:godel}{\rm
\noindent

\noindent
For a consistent and recursively enumerable theory $T$  extending Robinson's arithmetic,  the consistency of $\neg{\sf Pr}_T(\ulcorner\theta\urcorner)\leftrightarrow\theta$ with $T$  implies that $\theta$ is unprovable in $T$, but does not imply that $\theta$ is independent from $T$, even if $T$ is $\omega$-consistent:

\begin{itemize}
\item[(1)] If $T\vdash\theta$ then $T\vdash{\sf Pr}_T(\ulcorner\theta\urcorner)$, and so $T+[\neg{\sf Pr}_T(\ulcorner\theta\urcorner)\leftrightarrow\theta]
\vdash\neg\theta$, therefore  %the theory
$T+[\neg{\sf Pr}_T(\ulcorner\theta\urcorner)\leftrightarrow\theta]$ cannot be consistent.
\item[(2)]  For a contradictory sentence like  $\delta=(0\neq 0)$, the sentence $\neg{\sf Pr}_T(\ulcorner\delta\urcorner)\leftrightarrow\delta$ is consistent with $T$ (by  G\"odel's Second Incompleteness Theorem), but $\delta$ is not independent from $T$ (as $T$ proves its negation).
\end{itemize}

It is stated in \cite[p.~202]{moschovakis10}
that
\textsf{\small for every sentence $\sigma$,
 $T\vdash\sigma\iff T\vdash\neg{\sf Pr}_T(\ulcorner\sigma\urcorner)$ implies $T\nvdash\neg\sigma$ if $T$ is sound}. Unfortunately, this is not true since for e.g. $\sigma=(0\!\neq\!0)$ we do have that $T\vdash\sigma\iff T\vdash\neg{\sf Pr}_T(\ulcorner\sigma\urcorner)$ by G\"odel's second incompleteness theorem, but trivially $T\vdash\neg\sigma$ holds. If we replace ${\sf Pr}_T(x)$ with Rosser's provability predicate $R{\sf Pr}_T(x)$, then it is true that for every $\varrho$ that satisfies $T\vdash\varrho\iff T\vdash\neg R{\sf Pr}_T(\ulcorner\varrho\urcorner)$ we have $T\nvdash\varrho,\neg\varrho$ if $T$ is (only) consistent; see the next theorem.
}
\hfill\ding{71}\end{remark}

However, the Weak Diagonal Lemma is sufficiently strong to prove Rosser's theorem:

\begin{theorem}[Weak Diagonal Lemma $\boldsymbol\Longrightarrow$ Rosser's  Theorem]\label{thm:wdlross}
\noindent

\noindent
If the Weak Diagonal Lemma holds for a
 consistent and recursively enumerable theory that extends Robinson's arithmetic, then there exists a sentences which is independent from that theory.
\end{theorem}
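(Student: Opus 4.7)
The plan is to imitate Rosser's classical argument but to replace its reliance on the full Diagonal Lemma with the Weak Diagonal Lemma granted by the hypothesis. Since the theory $T$ is consistent, recursively enumerable, and extends Robinson's arithmetic, there is a Rosser provability predicate $R{\sf Pr}_T(x)$, say
$R{\sf Pr}_T(x)\equiv\exists y\,\bigl[\mathrm{Prf}_T(y,x)\wedge \forall z\!\leqslant\!y\,\neg\mathrm{Prf}_T(z,\mathrm{neg}(x))\bigr]$,
with the two standard derivability conditions that I will call (a) and (b):
\textup{(a)} if $T\vdash\phi$ then $T\vdash R{\sf Pr}_T(\ulcorner\phi\urcorner)$; and
\textup{(b)} if $T\vdash\neg\phi$ then $T\vdash\neg R{\sf Pr}_T(\ulcorner\phi\urcorner)$.
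These are provable in $T$ because $\mathrm{Prf}_T$ is a $\Delta_0$ predicate and $T\supseteq Q$ decides every closed $\Delta_0$ sentence correctly.

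The second step is to apply the Weak Diagonal Lemma (Definition~\ref{def:wdl}) to the formula $\Psi(x)\equiv\neg R{\sf Pr}_T(x)$. This yields a sentence $\varrho$ such that
$T+\bigl[\neg R{\sf Pr}_T(\ulcorner\varrho\urcorner)\leftrightarrow\varrho\bigr]$ is consistent; call this extension $T^{+}$. I would then argue that $\varrho$ is independent of $T$ by showing $T\nvdash\varrho$ and $T\nvdash\neg\varrho$ separately. For the first, if $T\vdash\varrho$ then (a) gives $T\vdash R{\sf Pr}_T(\ulcorner\varrho\urcorner)$; but $T^{+}\vdash\varrho$ too, so via the biconditional $T^{+}\vdash\neg R{\sf Pr}_T(\ulcorner\varrho\urcorner)$, contradicting the consistency of $T^{+}$. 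For the second, if $T\vdash\neg\varrho$ then (b) gives $T\vdash\neg R{\sf Pr}_T(\ulcorner\varrho\urcorner)$; but then via the biconditional $T^{+}\vdash\varrho$, again contradicting the consistency of $T^{+}$. Hence $\varrho$ is independent of $T$.

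The main obstacle is precisely the verification of the second Rosser condition (b). Condition (a) is cheap, as it follows from $\Sigma_1$-completeness of $Q$. Condition (b), however, requires showing that once a proof of $\neg\phi$ of code $k$ is available to $T$, then for every potential proof $y$ of $\phi$ the predicate $T$ can locate a smaller-or-equal proof of $\neg\phi$; this is handled by splitting on $y\!\leqslant\!\overline{k}$ versus $y\!>\!\overline{k}$, using the fact that $Q$ proves $\forall y\,(y\!\leqslant\!\overline{k}\vee y\!>\!\overline{k})$, and then appealing to the $\Delta_0$-completeness of $Q$ to decide each of the finitely many cases $y=\overline{0},\ldots,\overline{k}$. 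Once (a) and (b) are in hand, the rest of the argument is a short diagram chase inside $T^{+}$ as sketched above; note that, in contrast with the standard Rosser proof, here we never need the biconditional $\neg R{\sf Pr}_T(\ulcorner\varrho\urcorner)\!\leftrightarrow\!\varrho$ to be \emph{provable} in $T$ -- mere consistency with $T$ suffices, which is exactly why the Weak Diagonal Lemma is enough to carry the Rosser argument through.
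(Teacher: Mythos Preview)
Your proof is correct and follows essentially the same route as the paper: apply the Weak Diagonal Lemma to the negated Rosser provability predicate and then run the standard Rosser case split inside the consistent extension $T^{+}$. The only difference is presentational---you package the two Rosser properties as black-box conditions (a) and (b), whereas the paper carries out the corresponding $\Delta_0$-completeness and case-split arguments inline within the theory $U$; note, though, that your ``cheap'' condition (a) also tacitly uses the consistency of $T$ (to rule out a shorter proof of $\neg\phi$), not just $\Sigma_1$-completeness.
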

\begin{proof}

\noindent
For such a theory $T$, suppose that ${\sf prf}_T(x,y)$ is its proof predicate (stating that $x$ is the G\"odel code of a proof of the sentence with G\"odel code $y$ in $T$). By the Weak Diagonal Lemma there exists a sentence $\rho$ such that the following theory is consistent: $$U=T+\Big(\forall x\big[{\sf prf}_{T}(x,\ulcorner\rho\urcorner)\rightarrow\exists y\!<\!x\, {\sf prf}_{T}(y,\ulcorner\neg\rho\urcorner)\big]
\longleftrightarrow\rho\Big).$$
The standard proof of Rosser's theorem can show that $\rho$ is independent from $T$:
\begin{itemize}
\item If $T\vdash\rho$, then  $T\vdash{\sf prf}_{T}(\overline{k},\ulcorner\rho\urcorner)$ for some $k\!\in\!\mathbb{N}$ and so
    $U\vdash\exists y\!<\!\overline{k}\, {\sf prf}_{T}(y,\ulcorner\neg\rho\urcorner)$,
    by the definition of $U$,  which contradicts $\bigwedge\hspace{-0.8em}\bigwedge_{m\in\mathbb{N}}
    \; U\vdash\neg{\sf prf}_{T}(\overline{m},\ulcorner\neg\rho\urcorner)$  (that holds by $T\nvdash\neg\rho$).
\item If $T\vdash\neg\rho$, then $T\vdash{\sf prf}_{T}(\overline{k},\ulcorner\neg\rho\urcorner)$ for some $k\!\in\!\mathbb{N}$.
    Reason inside  $U$:
\begin{itemize}
\item[]
    for some $x$ we   have
     ${\sf prf}_{T}(x,\ulcorner\rho\urcorner)\wedge\forall y\!<\!x\,\neg{\sf prf}_{T}(y,\ulcorner\neg\rho\urcorner)$; now  $\overline{k}\!<\!x$ is impossible,
    and so   $x\!\leqslant\!\overline{k}$,
    whence $\bigvee\hspace{-0.8em}\bigvee_{i\leqslant k}\,(x=\overline{i})$, therefore  $\bigvee\hspace{-0.8em}\bigvee_{i\leqslant k}\,{\sf prf}_{T}(\overline{i},\ulcorner\rho\urcorner)$.
\end{itemize}
Hence,   $U\vdash\bigvee\hspace{-0.8em}\bigvee_{i\leqslant k}\,{\sf prf}_{T}(\overline{i},\ulcorner\rho\urcorner)$, but this contradicts $\bigwedge\hspace{-0.8em}\bigwedge_{m\in\mathbb{N}} \,  U\vdash\neg{\sf prf}_{T}(\overline{m},\ulcorner\rho\urcorner)$
    (that holds by $T\nvdash\rho$).
\end{itemize}
Therefore, $T\nvdash\rho,\neg\rho$.
\end{proof}

So, the Weak Diagonal Lemma is worthy of  studying further.
Unfortunately, the second proof (Subsection~\ref{p2}) for Tarski's Theorem cannot be carried over to the Syntactic Tarski's Theorem (cf.~\cite{visser19}). However, the first proof (Subsection~\ref{p1}) can be adapted for it:

\begin{theorem}[Syntactic Tarski's Theorem]\label{thm:p1stt}
\noindent

\noindent
If $T$ is a consistent extension of Robinson's arithmetic, then for no formula $\Psi(x)$ can we have  $T\supseteq{\sf TB}^\Psi$.
\end{theorem}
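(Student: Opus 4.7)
The plan is to adapt the Berry-paradox argument of Subsection~\ref{p1} to the syntactic setting, replacing the truth predicate $\Upsilon$ by $\Psi$ throughout Definition~\ref{def:berry} and setting $L=6\ell_\Psi$, so that ${\tt B}_\Psi(x)$ is a formula of length less than $L$. I suppose toward contradiction that $T\supseteq{\sf TB}^\Psi$ for some $\Psi$, with $T$ consistent, and aim to derive that $T$ is inconsistent.

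First I would verify the ``paradoxical'' direction: for every standard $n\in\mathbb{N}$, $T\vdash\neg{\tt B}_\Psi(\overline{n})$. This is essentially a syntactic transcription of the proof in Subsection~\ref{p1}: from ${\tt B}_\Psi(\overline{n})$ and the order-theoretic uniqueness $\forall u,v[{\tt B}_\Psi(u)\wedge{\tt B}_\Psi(v)\to u\!=\!v]$ one extracts $\forall\zeta[{\tt B}_\Psi(\zeta)\leftrightarrow\zeta\!=\!\overline{n}]$, whose ${\sf TB}^\Psi$-instance yields $\Psi({\tt D}(\overline{\ulcorner{\tt B}_\Psi\urcorner},\overline{n}))$; since ${\tt B}_\Psi$ is a formula of length less than $L$, this witnesses ${\tt Def}_\Psi^{<\overline{L}}(\overline{n})$, contradicting the clause $\neg{\tt Def}_\Psi^{<\overline{L}}(\overline{n})$ built into ${\tt B}_\Psi(\overline{n})$.

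The subtler half is converting these infinitely many refutations into an outright inconsistency. In the semantic proof one appealed to ``the least $\mathfrak{b}$ not definable by any formula of length less than $L$'', but syntactically one cannot find such a $\mathfrak{b}$ directly, because $T$ extends only Robinson's arithmetic and may fail to prove any individual instance $\psi_{i,n}\equiv\forall\zeta[\varphi_i(\zeta)\leftrightarrow\zeta\!=\!\overline{n}]$, where $\varphi_1,\ldots,\varphi_N$ enumerate the one-variable formulas of length less than $L$. My way around this is to replace the least-number principle with external induction on $n$. Unfolding $\neg{\tt B}_\Psi(\overline{n})$ as ${\tt Def}_\Psi^{<\overline{L}}(\overline{n})\vee\exists w\!<\!\overline{n}\,\neg{\tt Def}_\Psi^{<\overline{L}}(w)$ and invoking Robinson's decomposition $w\!<\!\overline{n+1}\leftrightarrow\bigvee_{k\leqslant n}w\!=\!\overline{k}$, the induction hypothesis annihilates the right disjunct at step $n\!+\!1$, yielding $T\vdash{\tt Def}_\Psi^{<\overline{L}}(\overline{n})$ for every $n\in\mathbb{N}$. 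Combining this with Robinson's enumeration of the short formulas and the ${\sf TB}^\Psi$-equivalence $\Psi({\tt D}(\overline{\ulcorner\varphi_i\urcorner},\overline{n}))\leftrightarrow\psi_{i,n}$, I conclude that $T\vdash\bigvee_{i=1}^{N}\psi_{i,n}$ for every $n\in\mathbb{N}$.

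A finite pigeonhole then closes the argument. Robinson yields $T\vdash\psi_{i,n}\to\neg\psi_{i,m}$ whenever $n\neq m$, since $\psi_{i,n}\wedge\psi_{i,m}$ would force $\overline{n}\!=\!\overline{m}$. In any model $\mathcal{M}\vDash T$, each of the $N\!+\!1$ disjunctions provable for $n\in\{0,\ldots,N\}$ supplies a witness $i_n\leqslant N$ with $\mathcal{M}\vDash\psi_{i_n,n}$; since $N\!+\!1$ values fall into $N$ slots, there are $n\neq n'$ with $i_n=i_{n'}$, and hence $\mathcal{M}\vDash\psi_{i_n,n}\wedge\psi_{i_n,n'}$, contradicting the implication above. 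So $T$ has no model, contradicting consistency. The main obstacle in this plan is exactly the absence of a least-number principle inside $T$; pigeonhole across finitely many of the disjunctions extracted from ${\tt Def}_\Psi^{<\overline{L}}(\overline{n})$ is what compensates for it.
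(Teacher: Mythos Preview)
Your proof is correct and follows essentially the same route as the paper: derive $T\vdash\neg{\tt B}_\Psi(\overline{n})$ for each $n$ via the Berry argument, use external induction on $n$ (exactly your unfolding of $\neg{\tt B}_\Psi(\overline{n})$ as a disjunction) to obtain $T\vdash{\tt Def}_\Psi^{<\overline{L}}(\overline{n})$ for all $n$, and then invoke a finite pigeonhole over the short formulas to produce the contradiction. The only cosmetic difference is that the paper bounds the G\"odel codes of the short formulas by a number $\mathfrak{p}$ and phrases the pigeonhole step inside $T$, whereas you enumerate the $N$ short formulas directly and carry out the pigeonhole model-theoretically; neither variation changes the substance of the argument.
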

\begin{proof}

\noindent
 Assume that the consistent theory $T$ contains Robinson's arithmetic, and it also contains the set ${\sf TB}^\Upsilon$  for a formula $\Upsilon(x)$. We work with the Convention of Subsection~\ref{p1} (and also Definition~\ref{def:berry}). Let $\mathfrak{q}$ be the term $\overline{6}\cdot\overline{\ell_\Upsilon}$ which represents  the number $6\ell_\Upsilon$. Fix a number $n\!\in\!\mathbb{N}$; and reason inside the theory $T$:
 \begin{itemize}
 \item[]
 Assume ${\tt B}_\Upsilon(\overline{n})$; so,    ${\tt Berry}_\Upsilon^{<\mathfrak{q}}(\overline{n})$ thus
 (1)~$\neg{\tt Def}_\Upsilon^{<\mathfrak{q}}(\overline{n})$ and (2)~$\forall w\!<\!\overline{n}\,{\tt Def}_\Upsilon^{<\mathfrak{q}}(w)$ hold.
 Fix $\zeta$; if ${\tt B}_\Upsilon(\zeta)$ holds, then
% ${\tt Berry}_\Upsilon^{<\mathfrak{q}}(\zeta)$ holds
%we  have
(i)~$\neg{\tt Def}_\Upsilon^{<\mathfrak{q}}(\zeta)$ and (ii)~$\forall w\!<\!\zeta\,{\tt Def}_\Upsilon^{<\mathfrak{q}}(w)$.
 We also have
 either $\zeta\!<\!\overline{n}$ or $\zeta\!=\!\overline{n}$ or $\zeta\!>\!\overline{n}$ (which holds  in Robinson's arithmetic). Now, $\zeta\!<\!\overline{n}$ contradicts (i) and (2), and $\zeta\!>\!\overline{n}$ contradicts (1) and (ii). Whence, $\zeta\!=\!\overline{n}$; which shows that  $\forall\zeta[{\tt B}_\Upsilon(\zeta)\leftrightarrow\zeta\!=\!\overline{n}]$ holds.  Thus,  ${\tt D}(\ulcorner{\tt B}_\Upsilon\urcorner,\overline{n})$ holds and since ${\tt len}({\tt B}_\Upsilon(x))\!<\!\mathfrak{q}$, then we have ${\tt Def}_\Upsilon^{<\mathfrak{q}}(\overline{n})$ by ${\sf TB}^\Upsilon$; which contradicts (1). Therefore, the assumption ${\tt B}_\Upsilon(\overline{n})$ leads to a contradiction.
 Whence, $\neg{\tt B}_\Upsilon(\overline{n})$ holds, so $\neg{\tt Berry}_\Upsilon^{<\mathfrak{q}}(\overline{n})$, thus   we have ($\ast$) $\bigwedge\hspace{-0.8em}\bigwedge_{i<n}{\tt Def}_\Upsilon^{<\mathfrak{q}}(\overline{i})
 \rightarrow
 {\tt Def}_\Upsilon^{<\mathfrak{q}}(\overline{n})$. \end{itemize}
Therefore,  $T\vdash{\tt Def}_\Upsilon^{<\mathfrak{q}}(\overline{n})$  can be shown by induction on $n\!\in\!\mathbb{N}$ from $(\ast)$. Let $\mathfrak{p}\!\in\!\mathbb{N}$ be  greater than all the G\"odel codes of formulas with length less than $6\ell_\Upsilon$. Therefore, for all $n\!\in\!\mathbb{N}$ we have $T\vdash\exists\alpha\!<\!\overline{\mathfrak{p}}\,
\Upsilon(\ulcorner{\tt D}(\alpha,\overline{n})\urcorner)$. So, inside $T$ for any $n\!\in\!\mathbb{N}$ there exists some formula $\alpha_n(x)$ that defines $n$, i.e., $\forall\zeta[\alpha_n(\zeta)
\leftrightarrow\zeta\!=\!\overline{n}]$
 holds by ${\sf TB}^\Upsilon$, and the G\"odel codes of all $\alpha_n$'s are less than $\mathfrak{p}$. This contradicts the Pigeonhole's Principle a version of which is provable in Robinson's arithmetic: since both of the sentences $\forall x (x\nless 0)$ and $\forall x(x\!<\!\overline{k\!+\!1}\rightarrow x\!\leqslant\!\overline{k})$ are provable in this arithmetic, then for every $\{\alpha_k\!<\!\overline{\mathfrak{p}}\}_{k\leqslant\mathfrak{p}}$
 there should exist some $i\!<\!j\!\leqslant\!\mathfrak{p}$ such that $\alpha_i\!=\!\alpha_j$. Reason inside $T$ again:
  \begin{itemize}
 \item[] There are some $\overline{i}\!<\!\overline{j}\!\leqslant\!\overline{\mathfrak{p}}$ and some formula $\varphi(x)$ for which we have $\Upsilon(\ulcorner{\tt D}(\ulcorner\varphi\urcorner,\overline{i})\urcorner)$ and $\Upsilon(\ulcorner{\tt D}(\ulcorner\varphi\urcorner,\overline{j})\urcorner)$. So, by ${\sf TB}^\Upsilon$, both $\forall\zeta[\varphi(\zeta)
     \leftrightarrow\zeta\!=\!\overline{i}]$ and
     $\forall\zeta[\varphi(\zeta)
     \leftrightarrow\zeta\!=\!\overline{j}]$ hold, combining which implies that $\overline{i}\!=\!\overline{i}
     \rightarrow\varphi(\overline{i})
     \rightarrow\overline{i}\!=\!\overline{j}$, a contradiction.
 \end{itemize}
So, $T$ is inconsistent, which is a contradiction with the assumption.
\end{proof}

\section{Conclusion}
\label{sec:conc}
The semantic form of Tarski's Undefinability Theorem,
{\sl that the set $\{\ulcorner\eta\urcorner\mid
\mathbb{N}\vDash\eta\}$  is not definable in arithmetic}, is equivalent to the semantic form of the Diagonal Lemma,
that {\sl for a given $\Psi(x)$ there exists a sentence $\theta$ such that  $\mathbb{N}\vDash\Psi(\ulcorner\theta\urcorner)
\leftrightarrow\theta$}. We outlined two  seemingly diagonal-free proofs for these equivalent theorems. The syntactic form of Tarski's Theorem, that {\sl no consistent extension of Robinson's arithmetic contains the set of truth biconditionals ${\sf TB}^\Psi=\{\Psi(\ulcorner\beta\urcorner)
\leftrightarrow\beta \mid \beta \text{ is a sentence}\}$}, is equivalent to the Weak (syntactic) Diagonal Lemma, that {\sl for every  $\Psi(x)$ there exists a sentence $\theta$ such that
$\Psi(\ulcorner\theta\urcorner)
\leftrightarrow\theta$ is consistent with such a theory}. Even though  G\"odel's proof does not work with the Weak Diagonal Lemma, the weak lemma  is sufficiently strong to prove Rosser's theorem. So, the syntactic form of Tarski's theorem can derive G\"odel and Rosser's incompleteness theorem (by combining the proofs of Theorems~\ref{thm:p1stt}, \ref{thm:synequ} and \ref{thm:wdlross}).

\paragraph{\textsf{\textsc{Acknowledgements:}}}
This research is supported by the Office of the Vice Chancellor for Research and Technology, University of Tabriz, IRAN.

\end{document}